\renewcommand{\Cref}{\cref}
\newcommand{\normal}{\triangleleft}
\newcommand{\normaleq}{\trianglelefteq}
\newcommand{\notnormal}{\mathrel{\not\hskip-2pt\normal}}
\newcommand{\iso}{\cong}
\DeclareMathOperator{\Cay}{Cay}
\newcommand{\dir}{\overrightarrow}
\newcommand{\Cayd}{\mathop{\dir{\Cay}}}
 \newcounter{case}
 \newenvironment{case}[1][\unskip]{\refstepcounter{case}\it
 \medskip \noindent Case \thecase\ #1. }{\unskip\upshape}
 \renewcommand{\thecase}{\arabic{case}}
\numberwithin{equation}{section}
\theoremstyle{plain}
\newtheorem{thm}[equation]{Theorem}
\newtheorem{prop}[equation]{Proposition}
\newtheorem{cor}[equation]{Corollary}
\newtheorem{lem}[equation]{Lemma}
\theoremstyle{definition}
\newtheorem*{notation}{Notation}
\newtheorem*{defn}{Definition}
\newtheorem*{term}{Terminology}
\theoremstyle{remark}
\newtheorem*{rem}{Remark}
\newtheorem{nrem}[equation]{Remark}
\newtheorem*{ack}{Acknowledgments}
\newcommand{\noprelistbreak}{\@nobreaktrue\nopagebreak} 
\begin{document}

\title[2-generated Cayley digraphs on nilpotent groups]%
{2-generated Cayley digraphs on nilpotent groups have hamiltonian paths}

\author{Dave Witte Morris}
\address{Department of Mathematics and Computer Science,
University of Lethbridge, Lethbridge, Alberta, T1K~3M4, Canada}
\email{Dave.Morris@uleth.ca, http://people.uleth.ca/$\sim$dave.morris/}

\date{(date1), and in revised form (date2).}
\subjclass[2000]{05C20, 05C25, 05C45, 20D15}
\keywords{Cayley digraph, hamiltonian path, nilpotent group, Cayley graph}

\begin{abstract}
Suppose $G$ is a nilpotent, finite group. We show that if $\{a,b\}$ is any $2$-element generating set of~$G$, then the corresponding Cayley digraph $\Cayd(G;a,b)$ has a hamiltonian path.
This implies that all of the connected Cayley graphs of valence $\le 4$ on~$G$ have hamiltonian paths. 
\end{abstract}

\maketitle

\section{Introduction}

Let $G$ be a group. (All groups are assumed to be finite.)

\begin{defn}
For any subset~$S$ of~$G$, the \emph{Cayley digraph of~$S$ on~$G$} is the directed graph whose vertices are the elements of~$G$, and with a directed edge $g \rightarrow gs$, for every $g \in G$ and $s \in S$. It is denoted $\Cayd(G;S)$.
\end{defn}

It is known that every connected Cayley digraph on~$G$ has a hamiltonian path if either $G$ is abelian  (see \cref{abelian}) or $G$ has prime-power order (see \cref{pn}). Since abelian groups and $p$-groups are the basic examples of nilpotent groups, it is natural to ask whether it suffices to assume that $G$ is nilpotent. We provide some evidence that this may indeed be the case:

\begin{thm} \label{2GenHP}
Every connected Cayley digraph of outvalence\/~$2$ on any nilpotent group has a hamiltonian path.
\end{thm}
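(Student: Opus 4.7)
The plan is to argue by induction on $|G|$. The base cases are handled by the cited results: \cref{abelian} for abelian groups and \cref{pn} for $p$-groups. So we may assume $G$ is neither, meaning $G$ has at least two distinct prime divisors. Since $G$ is nilpotent, it factors as a direct product $G = P \times H$, where $P$ is one Sylow subgroup and $H$ is the Hall complement; both are proper nontrivial nilpotent groups whose orders are coprime. Writing $a = (a_P, a_H)$ and $b = (b_P, b_H)$, the hypothesis that $\{a,b\}$ generates $G$ forces $\{a_P, b_P\}$ to generate $P$ and $\{a_H, b_H\}$ to generate $H$. The argument then divides according to whether a Sylow factor can be chosen so that one generator projects trivially to it.

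\textbf{Principal case.} Suppose $P$ can be chosen so that (after possibly swapping $a$ and $b$) we have $b_P = e$. Then $P = \langle a_P\rangle$ is cyclic, and $b\in H$. The projection $\{a_H, b\}$ still generates $H$, and by the inductive hypothesis (or \cref{abelian} if this projected set has size one), $\Cayd(H; a_H, b)$ has a hamiltonian path $\gamma$. I would then lift $\gamma$ to a hamiltonian path of $\Cayd(G; a, b)$ by concatenating $|P|$ appropriately shifted traversals of $\gamma$, using the edge $a$ between traversals to cycle the $P$-coordinate through all of $P$. The coprimality of $|P|$ and $|H|$, together with the fact that $a_P$ generates $P$, should make the shifts hit every residue class exactly once.

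\textbf{Hard case.} The remaining case is when for every Sylow subgroup $P_i$, both projections $a_i$ and $b_i$ are nontrivial, so no direct factor can be peeled off by inspection. Here the plan is to pass to a quotient: choose a nontrivial normal subgroup $N$ contained in $Z(G)$ (nontrivial because $G$ is nilpotent) so that either $\bar G = G/N$ falls into the principal case above, or induction directly provides a hamiltonian path in $\Cayd(\bar G; \bar a, \bar b)$. The lift back to $G$ then uses a factor-group-style argument. One may also exploit the freedom to replace $\{a,b\}$ by a modified generating set (such as $\{a, a^i b\}$ for a suitable $i$) that trivializes a coordinate in some Sylow factor, reducing to the principal case.

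\textbf{Main obstacle.} The hardest step in both cases is the lifting. The classical factor group lemma lifts a hamiltonian \emph{cycle} via a product-of-generators condition on a cyclic normal subgroup, but here we only have a hamiltonian \emph{path} in the factor or quotient, so one must track the endpoints and insert connecting edges between shifted copies very carefully to avoid revisiting vertices. I expect the hard case to be especially delicate: one must identify precisely which normal subgroup to quotient by so that the inductive data produced in $\bar G$ contains enough information (a path with the right endpoints, or a path whose generator-word has a useful $N$-component) for the splice in $G$ to succeed.
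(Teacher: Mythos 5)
Your outline and the paper's proof diverge completely, and the place where your outline stops --- the lifting of a hamiltonian path from a quotient or a direct factor back up to $G$ --- is not a technical loose end but the entire content of the theorem. In your ``principal case,'' a traversal of the lift of $\gamma$ does not stay inside a single coset $\{x\}\times H$: every use of the edge $a=(a_P,a_H)$ inside $\gamma$ advances the $P$-coordinate as well, so each traversal wanders through many cosets, and whether $|P|$ connected traversals are pairwise vertex-disjoint depends on congruences involving the number of $a$-steps in $\gamma$ modulo $|P|$ (for instance, if that count is $\equiv -1 \pmod{|P|}$, consecutive shifted copies can collide outright). Coprimality of $|P|$ and $|H|$ does not resolve this. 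The ``hard case'' is in worse shape: lifting a hamiltonian \emph{path} (rather than a cycle) through a central quotient $G/N$ has no general mechanism, and you do not supply one. Finally, the suggestion to ``replace $\{a,b\}$ by $\{a,a^ib\}$'' is not an available move: $\Cayd(G;a,b)$ and $\Cayd(G;a,a^ib)$ are different digraphs, and a hamiltonian path in one does not yield one in the other.

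The paper sidesteps all of this with a two-line argument built on its preliminary machinery: for $S=\{a,b\}$ the arc-forcing subgroup $H=\langle a^{-1}b\rangle$ is cyclic, hence abelian, so \cref{ArcForcingIsAbelian} applies. That corollary rests on \cref{HPinArcForcing}, which quotients not by a normal subgroup but by the right cosets of the (generally non-normal) subgroup $H$; the point of the arc-forcing property is that both arcs out of a vertex $g$ land in the single coset $Hga$, which is exactly the structural fact that makes the lifting lemmas (\cref{skewed}, \cref{SkewedPath}) work. Nilpotence enters only through \cref{SubnormalSeries}, which interpolates a subnormal series from $H$ to $H^G$ with prime-power quotients generated by conjugates of $H$, so that \cref{pn} can be applied at each stage. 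If you want to salvage your direct-product strategy, you would essentially have to reprove this coset-lifting machinery; as written, the proposal identifies the obstacle correctly but does not overcome it.
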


There is no need to make any restriction on the outvalence of $\Cayd(G;S)$ if we assume the nilpotent group~$G$ has only one Sylow subgroup that is not abelian:

\begin{thm} \label{G=PxA}
If $G = P \times A$, where $P$ has prime-power order, and $A$ is abelian, then every connected Cayley digraph on~$G$ has a hamiltonian path.
\end{thm}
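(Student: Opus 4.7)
The plan is to proceed by induction on $|G|$ and appeal to the two established base cases supplied by the excerpt. If $P=\{e\}$ then $G=A$ is abelian and \cref{abelian} applies, while if $A=\{e\}$ then $G=P$ is a $p$-group and \cref{pn} applies. For the inductive step, with both factors nontrivial, I would select a cyclic normal subgroup $N$ of prime order sitting in the center of $G$, chosen so that $G/N$ retains the structure of a $p$-group times an abelian group. Two natural choices are available: an element of prime order in $A$ (which is automatically central, since $A\le Z(G)$), giving $G/N\iso P\times(A/N)$; or an element of order $p$ in the nontrivial center $Z(P)$, giving $G/N\iso(P/N)\times A$. In either case the inductive hypothesis supplies a hamiltonian path in $\Cayd(G/N;\bar S)$.

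The next step is to lift this hamiltonian path back to $\Cayd(G;S)$ via a factor-group argument. In its classical form, the factor group lemma states that a hamiltonian cycle in $\Cayd(G/N;\bar S)$ whose generator sequence has product equal to a generator of $N$ can be unrolled into a hamiltonian cycle in $\Cayd(G;S)$ by traversing the lifted sequence $|N|$ times (using centrality of $N$). A suitable variant adapted to paths produces a hamiltonian path instead. Because $N$ is central and of prime order, the voltage condition reduces to checking that the lifted product is nontrivial in $N$.

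I expect the main obstacle to be guaranteeing this voltage condition: the hamiltonian structure supplied by induction in $G/N$ need not have voltage generating $N$. The plan for circumventing this is to exploit the flexibility in the choice of hamiltonian path or cycle in the quotient, which is enhanced by the abelian side of $G/N$. Typical moves include swapping consecutive generators in the sequence (which in an abelian quotient is harmless but can change the lifted voltage in $G$), choosing a different hamiltonian path through the quotient, or, if needed, switching to the alternative reduction (using $N\le Z(P)$ instead of $N\le A$, or vice versa). In the worst case, where neither reduction directly delivers the voltage condition, one could appeal to \cref{2GenHP} after first extracting a suitable $2$-generated subgroup of $G$ whose Cayley digraph can be extended to cover all of $G$ using the remaining generators in $S$. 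Orchestrating these moves --- quotient-and-lift in the generic case, a local perturbation of the quotient path when the voltage fails, and a $2$-generator reduction as a final fallback --- is where the technical heart of the proof should lie.
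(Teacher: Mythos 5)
There is a genuine gap at the lifting step, and it is not a repairable detail but the crux of the matter. The factor-group (voltage) lemma you invoke needs a hamiltonian \emph{cycle} in $\Cayd(G/N;\bar S)$ with prescribed voltage: it is that closed walk which gets traversed $|N|$ times (stopping early, in the path variant) to cover $G$. A hamiltonian \emph{path} in the quotient --- which is all your inductive hypothesis supplies --- visits each coset of $N$ once and cannot be unrolled; its lift covers only one vertex per coset. Strengthening the induction to produce hamiltonian cycles in the quotient is not an option, since connected Cayley digraphs on abelian groups need not have hamiltonian cycles (only paths, which is why \cref{abelian} and the theorem itself are stated for paths). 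Your proposed repairs for the voltage condition also do not work: in an abelian quotient, permuting the entries of a hamiltonian cycle's generator sequence keeps the walk closed but generally destroys hamiltonicity (in $\mathbb{Z}_2\times\mathbb{Z}_2$ with generators $a,b$, the sequence $(a,b,a,b)$ is a hamiltonian cycle while its rearrangement $(a,a,b,b)$ revisits the identity after two steps), and the ``extract a $2$-generated subgroup and extend'' fallback is exactly the kind of claim that requires the whole proof.

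The paper takes a different route that avoids voltages entirely. It lets $H=\langle S^{-1}S\rangle$ be the arc-forcing subgroup and splits into two cases. If $H\neq G$, then $H$ is again a $p$-group times an abelian group, induction gives hamiltonian paths on $H$, and \cref{HPinArcForcing} finishes; that proposition rests on the subnormal series of \cref{SubnormalSeries} together with \cref{skewed} and \cref{SkewedPath}, and this is where the hypothesis that one factor has prime-power order is actually used (via \cref{pn}, applied to the quotients $H_{k+1}/H_k$). If $H=G$, minimality of $S$ and the Burnside basis theorem for $P$ produce a proper subset $S_0\subset S$ with $N=\langle S_0\rangle$ normal and containing $[G,G]$; one then concatenates a hamiltonian path in $\Cayd(N;S_0)$ with a hamiltonian path in the abelian quotient $\Cayd(G/N;S)$. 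To salvage your outline you would need to replace the central-quotient reduction with this arc-forcing machinery, or at minimum prove a lifting lemma of the type of \cref{SkewedPath}, which requires a hamiltonian cycle in $H\backslash\Cayd(G;S)$ rather than any structure in $G/N$ for a central $N$.
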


\begin{rem}
In abstract terms, the assumption $G = P \times A$ in \cref{G=PxA} is equivalent to assuming that $G$ is nilpotent and the commutator subgroup of~$G$ has prime-power order.
\end{rem}

The above results for directed graphs have the following consequence for (undirected) Cayley graphs.

\begin{cor} \label{val4}
Every connected Cayley graph of valence\/ $\le 4$ on any nilpotent group has a hamiltonian path.
\end{cor}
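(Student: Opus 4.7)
The plan is to derive \cref{val4} from the two preceding theorems by replacing the undirected Cayley graph with a suitable Cayley digraph. Let $\Cay(G;S)$ be a connected Cayley graph of valence $\le 4$ on a nilpotent group $G$, so $S = S^{-1}$, $|S| \le 4$, and $\langle S \rangle = G$. Any hamiltonian path in $\Cayd(G;T)$ with $T \subseteq S$ uses only edges of $\Cay(G;S)$, so it suffices to find $T \subseteq S$ for which either \cref{2GenHP} or \cref{G=PxA} applies to $\Cayd(G;T)$.

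The natural case split is on the number $t$ of inversion-classes $\{s,s^{-1}\}$ appearing in $S$. When $t \le 2$, I take $T$ to be a transversal of these classes, obtaining a subset of size $|T| = t \le 2$ that still generates $G$ because $T \cup T^{-1} = S$. Then $\Cayd(G;T)$ is a connected digraph of outvalence $t$; if $t=1$ then $G$ is cyclic and a hamiltonian path is immediate, while if $t=2$ then \cref{2GenHP} applies directly.

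When $t \ge 3$, the constraint $|S| \le 4$ forces at most one inversion-class to have size two, so $S$ contains at most one pair $\{a,a^{-1}\}$ of non-involutions, with all remaining elements being involutions. Decompose the nilpotent group as $G = P \times A$, where $P$ is the $2$-Sylow subgroup and $A$ is the product of the Sylow subgroups of odd order. Every involution of $G$ lies in $P$, so the projection $\pi_A \colon G \to A$ sends every involution of $S$ to the identity. Since $S$ generates $G$, the image $\pi_A(S)$ generates $A$; therefore either $A$ is trivial or $A = \langle \pi_A(a) \rangle$, and in either case $A$ is cyclic, hence abelian. The decomposition $G = P \times A$ then satisfies the hypotheses of \cref{G=PxA}, which supplies a hamiltonian path in $\Cayd(G;S)$, and hence in $\Cay(G;S)$.

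I do not expect any substantive obstacle: the real content is already packaged into \cref{2GenHP} and \cref{G=PxA}, and the corollary reduces to the combinatorial observation that a symmetric generating set of size at most $4$ in a nilpotent group either contains a two-element generating set or else confines all odd-order contributions of $G$ to a single cyclic factor. The only mild subtlety is noting that, under the convention $S = S^{-1}$, the valence of $\Cay(G;S)$ equals $|S|$, so ``valence $\le 4$'' translates directly to $|S| \le 4$ and the case analysis is exhaustive.
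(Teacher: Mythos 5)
Your proof is correct and follows essentially the same route as the paper's: both arguments split according to whether the symmetric generating set contributes at least two distinct inversion classes of non-involutions (in which case the valence bound forces a $2$-element generating set and \cref{2GenHP} applies) or not (in which case all but at most one pair of generators are involutions, hence lie in the Sylow $2$-subgroup, so the odd part of~$G$ is cyclic and \cref{G=PxA} applies). The only cosmetic difference is that you normalize $S$ to be symmetric and count inversion classes, whereas the paper passes to a minimal generating set and counts non-involutions via the inequality $4 \ge 2(\#S - \#S_2)$.
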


\begin{rem}
One can show quite easily that if $S = \{a,b\}$ is a $2$-element generating set of a group~$G$, such that $|a| = 2$, $|b| = 3$, and $|G| > 9  |ab^2|$, then $\Cayd(G; a,b)$ does not have a hamiltonian path. (This observation is attributed to J.~Milnor \cite[p.~267]{HolsztynskiStrube}.) Examples in which $G$ is (super)solvable can be constructed by taking $G$ to be an appropriate semidirect product  $\mathbb{Z}_6 \ltimes \mathbb{Z}_p$, where $p$ is a large prime that is congruent to~$1$ modulo~$6$.  Therefore,  the word ``nilpotent'' cannot be replaced with the word ``solvable'' (or even ``supersolvable'') in the statement of \cref{2GenHP}. 
\end{rem}

After some preliminaries in \cref{PrelimSect}, the above results are proved in \cref{MainPfSect}, by using the methods of \cite{Witte-pn}.
See the bibliography of \cite{M2Slovenian-LowOrder} for references on the search for hamiltonian cycles in Cayley graphs on general (non-nilpotent) groups.

\begin{ack}
This research was carried out during a visit to the University of Western Australia.  I enthusiastically thank that university, particularly the members of the School of Mathematics and Statistics, for their warm hospitality that made my visit both productive and enjoyable. 
The work was partially supported by a grant from the Natural Sciences and Engineering Research Council of Canada and by funds from Australian Research Council Federation Fellowship FF0770915.
\end{ack}

\section{Preliminaries} \label{PrelimSect}

All groups in this paper are assumed to be finite.

\begin{notation}
Let $G$ be a group, let $S$ be any subset of~$G$, and let $H$ be any subgroup of~$G$.
	\noprelistbreak
	\begin{itemize} \itemsep=\smallskipamount

	\item The \emph{Cayley digraph} $\Cayd(G;S)$ is the directed graph whose vertex set is~$G$, and with an arc from $g$ to~$gs$, for every $g \in G$ and $s \in S$.

	\item The \emph{Cayley graph} $\Cay(G;S)$ is the (undirected) graph that underlies $\Cayd(G;S)$. Thus, its vertex set is~$G$, and $g$ is adjacent to both $gs$ and~$gs^{-1}$, for every $g \in G$ and $s \in S$.

	\item $H \backslash {\Cayd(G;S)}$ denotes the digraph in which:
 		\begin{itemize}
		\item the vertices are the right cosets of~$H$,
		and
		\item there is a directed edge from $Hg$ to~$Hgs$, for each $g \in G$ and $s \in S$.
		\end{itemize}

	\item $H^G = \langle g^{-1} h g \mid h \in H, g \in G \rangle$ is the \emph{normal closure} of~$H$ in~$G$.

	\item $\langle S^{-1} S \rangle = \langle\, s_1^{-1} s_2 \mid s_1,s_2 \in S \,\rangle$ is the \emph{arc-forcing subgroup}. Note that, for any $a \in S$, we have $\langle S^{-1} S \rangle = \langle a^{-1} S \rangle = \langle\, a^{-1} s \mid s \in S \,\rangle$.

	\item For $s_1,s_2,\ldots,s_n \in S$, we use 
	$ (s_i)_{i=1}^n = (s_1,s_2,s_3,\ldots,s_n) $
to denote the walk in $\Cayd(G;S)$ that visits (in order) the vertices
	$$ e, \ s_1, \ s_1s_2, \  s_1s_2s_3, \  \ldots, \ s_1s_2 \cdots s_n .$$
Also, 
	$(s_1,s_2,s_3,\ldots,s_n)\#$
 denotes the walk  $(s_1,s_2,s_3,\ldots,s_{n-1})$ that is obtained by deleting the last term of the sequence.

	\end{itemize}
\end{notation}

\begin{term}
Contrary to most authors, we consider both $K_2$ and the loop on a single vertex to have hamiltonian cycles. This is because each of these graphs has a hamiltonian path whose terminal vertex is adjacent to its initial vertex.
\end{term}

The following well-known observation is very easy to prove.

\begin{lem}[{}{\cite[Thm.~30.3, p.~506]{Gallian-AlgText}}] \label{abelian}
Every connected Cayley digraph on any abelian group has a hamiltonian path.
\end{lem}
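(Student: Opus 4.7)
The plan is to proceed by induction on $|G|$. The base case $|G|=1$ is immediate from the stated Terminology (the single-vertex loop counts as having a hamiltonian path). For the inductive step, fix any $s \in S$ and set $H = \langle s \rangle$. If $H = G$, then $(s,s,\ldots,s)$ with $|G|-1$ terms is already a hamiltonian path, so I may assume $H$ is a proper subgroup.

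Because $G$ is abelian, $H$ is normal, and $G/H$ is again abelian; thus $H \backslash \Cayd(G;S)$ may be identified with the Cayley digraph of $G/H$ on the image of~$S$. Since $S$ generates~$G$, its image generates~$G/H$, so this quotient digraph is connected. By the inductive hypothesis it has a hamiltonian path, which I encode as a sequence $(s_1, s_2, \ldots, s_m)$ with $s_i \in S$ and $m = |G/H| - 1$. The key construction is to lift this sequence to a walk in $\Cayd(G;S)$ by interspersing $|H|-1$ copies of~$s$ between consecutive $s_i$ and bracketing the whole walk with such blocks:
$$W = \bigl(\underbrace{s,\ldots,s}_{|H|-1},\ s_1,\ \underbrace{s,\ldots,s}_{|H|-1},\ s_2,\ \ldots,\ s_m,\ \underbrace{s,\ldots,s}_{|H|-1}\bigr).$$

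The remaining work is to verify that $W$ is a hamiltonian path. Each block of $|H|-1$ consecutive applications of~$s$ cycles through all $|H|$ elements of its current coset of~$H$, after which the next $s_i$ advances the walk into the next coset dictated by the quotient hamiltonian path. Because that quotient path visits each coset exactly once and because powers of~$s$ act freely within each coset, $W$ visits each of the $|G|$ vertices exactly once; a quick count confirms that $W$ has $(|H|-1)|G/H| + (|G/H|-1) = |G|-1$ edges, as required. I do not anticipate any serious obstacle; the abelian hypothesis is used solely to guarantee that $H$ is normal, so that the quotient construction is legitimate and itself inherits the hypothesis of the lemma.
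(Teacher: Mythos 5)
The paper does not actually prove this lemma---it labels it a ``well-known observation'' and cites Gallian's textbook---so there is no in-paper argument to compare against. Your induction (quotient by $H=\langle s\rangle$, take a hamiltonian path in $H\backslash\Cayd(G;S)$, and lift it by sweeping out each coset with a block of $|H|-1$ copies of $s$) is the standard proof, and it is correct: the abelian hypothesis guarantees both that $Hgs=Hg$ so each block stays inside its coset, and that right multiplication by $s$ is a single $|H|$-cycle on each coset, so each block exhausts its coset exactly once; your edge count confirms the result is a hamiltonian path. The only wording to tighten is ``fix any $s\in S$'': if $e\in S$ and you pick $s=e$, then $H$ is trivial and the induction does not descend. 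Since $|G|>1$ and $S$ generates $G$, you can always choose a non-identity $s$, so this is a one-word fix rather than a substantive gap.
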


In the remainder of this section, we recall some useful results from \cite{Witte-pn}.

\begin{thm}[{}{Witte \cite{Witte-pn}}] \label{pn}
Every nontrivial, connected Cayley digraph on any group of prime-power order has a hamiltonian cycle.
\end{thm}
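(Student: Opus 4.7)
The plan is to induct on $|G|$. The base cases $|G|=1$ and $|G|=p$ are immediate: the single self-loop is a hamiltonian cycle under the terminology adopted above, and any connected Cayley digraph on $\mathbb{Z}_p$ is itself a directed cycle.

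For the inductive step, let $G$ be a nontrivial $p$-group with connected generating set~$S$. Since every $p$-group has nontrivial center, I would pick a central subgroup $N \le Z(G)$ with $|N|=p$; automatically $N \normal G$. In the quotient $\bar G = G/N$ the image $\bar S$ still generates, so the inductive hypothesis supplies a hamiltonian cycle of $\Cayd(\bar G;\bar S)$, encoded by a sequence $C=(s_1,\dots,s_n)$ of elements of~$S$ with $n=|\bar G|$. Lifted to $\Cayd(G;S)$, the walk $C$ starts at~$e$, visits exactly one representative of each coset of~$N$, and terminates at $\pi(C):=s_1 s_2 \cdots s_n \in N$.

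The next ingredient is the \emph{Factor Group Lemma}: when $\pi(C)$ generates~$N$, concatenating $C$ with itself $p$ times yields a closed walk in $\Cayd(G;S)$ that visits every element of~$G$ exactly once, i.e.\ a hamiltonian cycle. Because $|N|=p$, every nonidentity element of~$N$ is already a generator, so the inductive step collapses to arranging $\pi(C)\ne e$.

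The main obstacle, and the delicate part of the argument, is the case when the cycle returned by induction happens to satisfy $\pi(C)=e$. I would attack this by locally modifying the quotient cycle: for a chosen $a\in S$ and a pair of $\langle\bar a\rangle$-cosets at which the cycle enters via two different labels from~$S$, one can splice and reroute to obtain a new hamiltonian cycle $C'$ whose product $\pi(C')$ differs from $\pi(C)$ by a commutator that lies in the central subgroup~$N$. The hypothesis that $S$ generates~$G$, combined with $N$ being central of prime order, should supply enough such ``edge swaps'' to force $\pi(C')\ne e$ in every case other than $G$ abelian, which is already covered by \cref{abelian}. Making this swap-and-commutator analysis precise and exhaustive---in particular, verifying that the commutators produced actually sweep out all of~$N$ and not merely the identity---is the technical core of the proof and the step I expect to require the most care.
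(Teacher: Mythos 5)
Your opening moves---a central subgroup $N$ of order~$p$, a hamiltonian cycle $C$ of $\Cayd(G/N;\bar S)$ by induction, and the Factor Group Lemma when the voltage $\pi(C)=s_1s_2\cdots s_n$ is nontrivial---are all correct. But the entire content of the theorem is concentrated in the case $\pi(C)=e$, and there your argument is a hope rather than a proof. A ``splice and reroute'' must simultaneously (a) produce a walk that is again a hamiltonian cycle of the quotient digraph, and (b) change the voltage to a \emph{nonidentity} element of the particular order-$p$ subgroup $N$ you fixed, rather than to the identity or to some element of $[G,G]$ outside~$N$. Neither is automatic: the voltage change produced by a local modification is built from conjugates of elements of $S^{-1}S$ and depends on the global structure of the cycle, and no swap-and-commutator analysis is known that controls it for general $p$-groups. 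The set of voltages attainable by hamiltonian cycles of a Cayley digraph is a notoriously delicate invariant (this is exactly why the Factor Group Lemma method succeeds only for special classes of groups, such as those with cyclic commutator subgroup of prime order), and the cited paper \cite{Witte-pn} was written precisely to get around this obstruction. As it stands, your inductive step is unproved whenever the inherited cycle has trivial voltage, so there is a genuine gap.

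The proof the paper has in mind (see the Remark following \cref{HPinArcForcing}) avoids voltages entirely and is a genuinely different induction. One may assume the generating set $S$ is minimal, so by the Burnside basis theorem the arc-forcing subgroup $H=\langle S^{-1}S\rangle$ is a \emph{proper} subgroup of~$G$. \cref{SubnormalSeries} supplies a subnormal series $H=H_1\normal H_2\normal\cdots\normal H_m=H^G$ whose subquotients are generated by conjugates of~$H$; since $G/H^G$ is cyclic, $H^G\backslash{\Cayd(G;a)}$ is a directed cycle, and repeated application of \cref{skewed} walks this hamiltonian cycle down the series, with the induction hypothesis on $|G|$ supplying hamiltonian cycles on each $H_{k+1}/H_k$ and, at the last step, on $H$ itself. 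If you wish to rescue your approach, you would have to prove the voltage-modification claim in full generality, which is not easier than the theorem you are trying to prove.
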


\begin{lem}[{}{cf.~\cite[Lem.~4.1]{Witte-pn}}] \label{SubnormalSeries}
Suppose $H$ is a subgroup of a group~$G$.
If $G$ is nilpotent, then there is a subnormal series 
	$$ H = H_1 \normal H_2 \normal \cdots \normal H_m = H^G $$
of~$H^G$, such that, for $1 \le k < m$:
	\begin{enumerate} \renewcommand{\theenumi}{\roman{enumi}}
	\item  \label{SubnormalSeries-conjugate}
	$H_{k+1}/H_k$ is generated by a $G$-conjugate of~$H$,
	and
	\item  \label{SubnormalSeries-order}
	$H_{k+1}/H_k$ has prime-power order.
	\end{enumerate}
\end{lem}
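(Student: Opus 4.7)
The plan is to proceed by induction on $[H^G:H]$. When $H=H^G$ the trivial series of length~$1$ suffices, so assume $H<H^G$; I will produce $g\in G$ such that $H':=H\cdot H^g$ satisfies $H\normal H'\le H^G$ and $[H':H]$ is a prime power. Granting this, one checks that $(H')^G=H^G$, so the inductive hypothesis applied to~$H'$ supplies a subnormal series $H'\normal\cdots\normal H^G$ whose factors are generated by $G$-conjugates of~$H'$; refining each such factor using the decomposition $H'=H\cdot H^g$ gives the desired series for~$H$ after prepending the step $H\normal H'$.

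To find~$g$, I exploit the Sylow decomposition $G=P_1\times\cdots\times P_r$ of the nilpotent group. Setting $H_i=H\cap P_i$, one has $H=H_1\times\cdots\times H_r$ and $H^G=H_1^{P_1}\times\cdots\times H_r^{P_r}$, so $H<H^G$ forces $H_i\notnormal P_i$ for some~$i$. The normalizer condition in the $p_i$-group $P_i$ yields $N_{P_i}(H_i)<N_{P_i}(N_{P_i}(H_i))$, so I may choose $g_i\in N_{P_i}(N_{P_i}(H_i))\setminus N_{P_i}(H_i)$. Let $g\in G$ be the element whose $i$-th Sylow component is $g_i$ and whose other components are trivial. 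Then $H^g$ agrees with $H$ outside the $i$-th coordinate, where it equals $H_i^{g_i}\le N_{P_i}(H_i)$ (since $g_i$ normalizes $N_{P_i}(H_i)$); hence $H^g\le N_G(H)$, $H^g\neq H$, and $|HH^g/H|=|H_iH_i^{g_i}/H_i|$ is a power of~$p_i$.

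With $H':=H\cdot H^g$ in hand, induction furnishes $H'=K_1\normal K_2\normal\cdots\normal K_s=H^G$ in which each $K_{j+1}/K_j$ is generated by a $G$-conjugate $(H')^{g_j}$ of~$H'$ and has prime-power order. Using $(H')^{g_j}=H^{g_j}\cdot H^{gg_j}$, I refine by inserting $K_jH^{g_j}$ between $K_j$ and $K_{j+1}$. The first sub-step is a normal extension because $H^{g_j}\le(H')^{g_j}$ normalizes~$K_j$; the second is a normal extension because $H^{gg_j}\le(H')^{g_j}$ likewise normalizes~$K_j$, and $H^{gg_j}$ normalizes~$H^{g_j}$, obtained by conjugating the engineered relation ``$H^g\le N_G(H)$'' by~$g_j$. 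Each refined factor is then generated by a single $G$-conjugate of~$H$ and, being a subquotient of $K_{j+1}/K_j$, has prime-power order; omitting any step whose factor turns out to be trivial yields the series required by the lemma. I expect the refinement to be the main obstacle: without the specific property $H^g\le N_G(H)$ that was engineered in the Sylow step, the intermediate subgroup $K_jH^{g_j}$ would not in general be normalized by $H^{gg_j}$, and the refinement would break down.
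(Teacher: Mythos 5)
Your argument is correct, but it is organized quite differently from the paper's. The paper builds the chain greedily: given $H_k \subsetneq H^G$, it applies the normalizer condition to $N_G(H_k)$ (not to a Sylow component of~$H$) to find a single $G$-conjugate $g^{-1}Hg$ that lies in $N_G(H_k)$ but not in $H_k$, and sets $H_{k+1}=H_k\cdot(g^{-1}Hg)$; the Sylow decomposition enters only at the end of each step, to replace $g$ by one of its Sylow components so that $|H_{k+1}/H_k|$ divides $|P_i|$. Because every new term is obtained by adjoining just one conjugate of~$H$, condition~(i) holds immediately and no refinement is ever needed. Your version instead inducts on $[H^G:H]$, engineers a conjugate $H^g$ of $H$ itself (rather than of $H_k$) that normalizes $H$ and is supported in a single Sylow factor, applies the lemma to $H'=HH^g$, and then splits each factor $K_{j+1}/K_j$ in two. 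The refinement is where you pay for this: it works only because you carry the extra relation $H^g\le N_G(H)$, which conjugates to $H^{gg_j}\le N_G(H^{g_j})$ and makes $K_jH^{g_j}$ normal in $K_{j+1}$ --- exactly as you observe. Your supporting checks (that $(H')^G=H^G$, that $[H':H]$ is a power of~$p_i$, and that the refined factors, being subquotients of a prime-power factor, again have prime-power order) are all sound. What the paper's route buys is brevity --- one construction, no refinement; what yours buys is that the normalizer condition is only ever invoked inside a single $p$-group, so the prime-power bookkeeping is automatic from the start rather than arranged by an adjustment of~$g$ at the end.
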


\begin{proof}
The desired conclusion is proved in \cite[Lem.~4.1]{Witte-pn} under the stronger assumption that $G$ is a $p$-group.
The general result follows from this special case, since every nilpotent group is a direct product of $p$-groups.

For the reader's convenience, we provide a proof from scratch: given $H_1,\ldots,H_k$, with $H_k \subsetneq H^G$, we show how to construct $H_{k+1}$. 
Since $H \subseteq H_k \subsetneq H^G$, we  have $H_k \notnormal G$, which means $N_G(H_k) \neq G$. Then, because proper subgroups of nilpotent groups are never self-normalizing \cite[Hauptsatz~III.2.3(c), p.~260]{Huppert1}, we have $N_G(H_k) \subsetneq N_G \bigl( N_G(H_k) \bigr)$, so we may choose some $x \in G$, such that 
	$$ \text{$x$ normalizes $N_G(H_k)$, but $x \notin N_G(H_k)$.} $$
Since $x \notin N_G(H_k)$, we know $x^{-1} H_k x \not\subseteq H_k$. We also know (from property~(\ref{SubnormalSeries-conjugate}) and induction) that $H_k$ is generated by $G$-conjugates of~$H$. Hence, there exists $g \in G$, such that 
	$$ \text{$g^{-1} H g \not\subseteq H_k$, and $g^{-1} H g  \subseteq x^{-1} H_k x$.} $$
Let $H_{k+1} = H_k \cdot (g^{-1} H g)$. Then:
	\begin{itemize}
	\item $H_{k+1}$ properly contains~$H$, because $g^{-1} H g \not\subseteq H_k$,
	and
	\item $H_k \normal H_{k+1}$, because $g^{-1} H g \subseteq x^{-1} H_k x \subseteq N_G(H_k)$ (since $x$ normalizes $N_G(H_k)$).
	\end{itemize}
By construction, the quotient $H_{k+1}/H_k$ is generated by $g^{-1} H g$.

Since $G$ is nilpotent, it is the direct product of its Sylow subgroups: $G = P_1 \times  \cdots \times P_r$. Hence, we may write $g = g_1  g_2  \cdots g_r$, with $g_i \in P_i$. Furthermore, every subgroup of~$G$ is the direct product of its intersections with the Sylow subgroups of~$G$. Therefore, since $g^{-1} H g \not\subseteq H_k$, there is some~$i$, such that $(g^{-1} H g) \cap P_i\not\subseteq H_k \cap P_i$. This means $g_i^{-1} H g_i \not\subseteq H_k$. We also have 
	$$g_i^{-1} H g_i 
	\subseteq \bigl\langle H, \, (g_i^{-1} H g_i) \cap P_i \bigr\rangle 
	\subseteq \langle H, g^{-1} H g \rangle 
	\subseteq \langle H_k, H_{k+1} \rangle 
	= H_{k+1} .$$
Hence, there is no harm in assuming $g = g_i \in P_i$. Then $g^{-1} H g\subseteq P_i H$, so $|H_{k+1} / H_k|$ is a divisor of~$|P_i|$, which is a prime-power.
\end{proof}

\begin{nrem} \label{SubnormalSeriesHGp}
The assumption that $G$ is nilpotent in \cref{SubnormalSeries} can be replaced with the assumption that $H^G$ is a $p$-group (for some prime~$p$). To see this, note that if $H_k \notnormal H^G$, then, since $H^G$ is nilpotent, the proof of \cref{SubnormalSeries} constructs an appropriate subgroup $H_{k+1} = H_k \cdot (g^{-1} H g)$, with $g \in H^G$. On the other hand, if $H_k \normal H^G$, then every $G$-conjugate of~$H$ normalizes $H_k$, so it is easy to construct $H_{k+1}$. (Since $|H_{k+1}/H_k|$ is a divisor of $|H^G|$, which is a power of~$p$, property~(\ref{SubnormalSeries-order})  is automatically satisfied.)
\end{nrem}

\begin{lem}[{}{\cite[Lem.~5.1]{Witte-pn}}] \label{skewed} 
Suppose $S$ generates a group~$G$, and let $H^+$ and~$H^-$ be subgroups of~$G$ with $H^- \normal H^+$. If 
	\noprelistbreak
	\begin{itemize}
	\item there is a hamiltonian cycle in $H^+ \backslash {\Cayd(G;S)}$, 
	\item every connected Cayley digraph on $H^+/H^-$ has a hamiltonian cycle,
	and
	\item $H^+/H^-$ is generated by a $G$-conjugate of the arc-forcing subgroup $\langle S^{-1} S \rangle$,
	\end{itemize}
then there is a hamiltonian cycle in $H^- \backslash {\Cayd(G;S)}$.
\end{lem}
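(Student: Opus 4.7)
Let $C = (s_1, \ldots, s_n)$ be the given hamiltonian cycle of $H^+ \backslash \Cayd(G;S)$ (starting at $H^+$), set $v_k = s_1 s_2 \cdots s_k$ and $y = v_n \in H^+$, and write $m = |H^+/H^-|$. The plan is to build a closed walk in $\Cayd(G;S)$ of length $mn$ consisting of $m$ traversals of $C$, each modified by a single arc-swap at one carefully chosen position~$i$, and to argue its image in $H^- \backslash \Cayd(G;S)$ is a hamiltonian cycle. An unmodified traversal of $C$ lifts to a walk whose net effect on fibers (which, because $H^- \normal H^+$, identify canonically with $H^+/H^-$) is right-multiplication by $yH^-$; if $\langle yH^- \rangle = H^+/H^-$ we are done, and otherwise arc-swaps will supply the missing shifts.

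The key calculation is that replacing $s_i$ with another arc $s_i^{(j)} \in S$ satisfying $H^+ v_{i-1} s_i^{(j)} = H^+ v_i$ keeps the modified traversal a lift of $C$ but changes its net displacement from $y$ to $\alpha_j y$, where $\alpha_j := v_{i-1}\bigl(s_i^{(j)} s_i^{-1}\bigr) v_{i-1}^{-1} \in v_{i-1} K v_{i-1}^{-1} \cap H^+$ and $K := \langle S^{-1} S \rangle$. As $s_i^{(j)}$ ranges over admissible arcs of $S$, the $\alpha_j$'s generate the full $G$-conjugate $v_{i-1} K v_{i-1}^{-1}$. To exploit the third hypothesis, I choose $i$ so that $v_{i-1}$ lies in the same right $H^+$-coset as the conjugating element $g$ provided by that hypothesis; since $C$ visits every right coset of $H^+$, such an $i$ exists. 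Writing $v_{i-1} = hg$ with $h \in H^+$, the fact that conjugation by $h \in H^+ \subseteq N_G(H^-)$ is an automorphism of $H^+/H^-$ shows that $v_{i-1} K v_{i-1}^{-1}$ also generates $H^+/H^-$ modulo $H^-$. Consequently the set $T := \{\alpha y H^- : s_i^{(j)} \text{ admissible}\}$ generates $H^+/H^-$.

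By the second hypothesis, the connected Cayley digraph $\Cayd(H^+/H^-; T)$ admits a hamiltonian cycle $D = (\delta_1, \ldots, \delta_m)$ with $\delta_1 \delta_2 \cdots \delta_m = e$. I then pick the arc-swap in the $j$-th traversal so that $\alpha_j y H^- = \delta_j$. Tracking fiber labels through the resulting walk: the fibers over $v_0, \ldots, v_{i-1}$ are visited in traversal~$j$ at label $\lambda_j := \delta_1 \cdots \delta_{j-1}$, while the fibers over $v_i, \ldots, v_{n-1}$ are visited at label $\lambda_j \alpha_j = \lambda_{j+1} y^{-1}$. Both sequences range over all of $H^+/H^-$ as $j$ varies because $D$ is hamiltonian on $H^+/H^-$, so every one of the $mn$ cosets of $H^-$ in $G$ is visited exactly once. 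The total displacement of the walk is $\prod_j \alpha_j y$, which equals $\delta_1 \cdots \delta_m = e$ in $H^+/H^-$ and thus lies in $H^-$, so the walk closes in $H^- \backslash \Cayd(G;S)$ and is the desired hamiltonian cycle. The main technical obstacle is aligning the $G$-conjugate of $K$ supplied by the third hypothesis with a specific position $i$ on $C$; this works because $C$ passes through every $H^+$-coset and because conjugation by elements of $H^+$ preserves generation in the quotient $H^+/H^-$.
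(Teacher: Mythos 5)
Your argument is correct and is essentially the standard proof of this lemma (the paper only cites \cite[Lem.~5.1]{Witte-pn} for it, but its proof of \cref{SkewedPath} is exactly your construction specialized to a swap at the last position of the cycle, where your $\alpha y$ becomes the skewed generator $s_1 s_2\cdots s_{m-1}s$). The one slip is cosmetic: $s_i^{(j)} s_i^{-1}$ lies in $S S^{-1}$, so $\alpha_j$ lies in $(v_{i-1}a)\langle S^{-1}S\rangle (v_{i-1}a)^{-1}$ for $a \in S$ rather than in $v_{i-1}\langle S^{-1}S\rangle v_{i-1}^{-1}$, and the alignment step should place $v_{i-1}a$ (not $v_{i-1}$) in the coset $H^+ g^{-1}$ determined by the hypothesized conjugate $g^{-1}\langle S^{-1}S\rangle g$ --- but since these are all $G$-conjugates of the arc-forcing subgroup and conjugation by elements of $H^+$ preserves generation of $H^+/H^-$, your argument goes through unchanged.
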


Essentially the same proof establishes an analogous result for hamiltonian paths, but we need only the following simplified version in which $H^-$ is trivial:

\begin{lem}[{}cf.~{\cite[Lem.~5.1]{Witte-pn}}] \label{SkewedPath}
Suppose $S$ generates a group~$G$, and let $H = \langle S^{-1} S \rangle$ be the arc-forcing subgroup. If 
	\noprelistbreak
	\begin{itemize}
	\item there is a hamiltonian cycle in $H \backslash {\Cayd(G;S)}$, 
	and
	\item every connected Cayley digraph on $H$ has a hamiltonian path,
	\end{itemize}
then there is a hamiltonian path in $\Cayd(G;S)$.
\end{lem}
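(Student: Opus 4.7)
My plan is to emulate the Factor Group Lemma argument: traverse the edges of the given hamiltonian cycle of the quotient $|H|$ times, but at the end of each traversal replace the final edge $t_m$ by a well-chosen element of $S$, so that the successive traversals start at different elements of $H$ and eventually cover all of $H$. Write the hamiltonian cycle in $H\backslash\Cayd(G;S)$ as $(t_1,\ldots,t_m)$, with $m = [G:H]$, and put $w = t_1\cdots t_{m-1}$. Since $(t_1,\ldots,t_m)$ closes up in the quotient, the product $v := t_1\cdots t_m = w\, t_m$ lies in $H$.

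Next I verify that $T := \{ws : s \in S\}$ is a generating set for $H$. Each $ws = v\, t_m^{-1} s$ lies in $H$, because $v \in H$ and $t_m^{-1} s \in \langle S^{-1}S\rangle = H$. For any $s,s' \in S$, $(ws)^{-1}(ws') = s^{-1}s'$, so $\langle T^{-1}T\rangle \supseteq \langle S^{-1}S\rangle = H$, which forces $\langle T\rangle = H$. Hence $\Cayd(H;T)$ is a connected Cayley digraph on $H$, and by hypothesis it has a hamiltonian path. Translating by left multiplication if needed, I may assume this path starts at $e$; recording the sequence of generators used as $(ws_1, ws_2, \ldots, ws_{|H|-1})$ with each $s_i \in S$, the vertices $g_1 := e$ and $g_{i+1} := g_i\, w\, s_i$ enumerate $H$.

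Finally I assemble the candidate hamiltonian path in $\Cayd(G;S)$ as
\[ W = (t_1,\ldots,t_{m-1},\, s_1,\, t_1,\ldots,t_{m-1},\, s_2,\, \ldots,\, s_{|H|-1},\, t_1,\ldots,t_{m-1}), \]
starting at $e$. The $i$-th block contributes the vertices $\{g_i t_1\cdots t_j : 0 \le j \le m-1\}$, and the edge labelled $s_i$ correctly carries $g_i t_1\cdots t_{m-1} = g_i w$ to $g_i w s_i = g_{i+1}$. To see that all $|H|\, m = |G|$ visited vertices are distinct, note two things: the cosets $H, Ht_1,\ldots,Ht_1\cdots t_{m-1}$ are pairwise distinct (since $(t_1,\ldots,t_m)$ is a hamiltonian cycle in the quotient), so vertices with different indices $j$ lie in different $H$-cosets; and for a fixed $j$, the vertices $g_i t_1\cdots t_j$ are distinct as $i$ varies because the $g_i$ are distinct elements of $H$. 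Thus $W$ is a hamiltonian path.

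The main obstacle, I expect, is identifying the correct ``twist'': that the final edge $t_m$ of each traversal can be freely replaced by any $s \in S$ (because $t_m^{-1} s \in H$), and that the resulting offsets $\{ws : s \in S\}$ already generate $H$. Once this observation is in hand, the hypothesis that every connected Cayley digraph on $H$ has a hamiltonian path applies directly to $\Cayd(H;T)$, and the remainder of the argument is routine bookkeeping.
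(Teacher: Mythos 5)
Your proposal is correct and takes essentially the same approach as the paper: your generating set $T = wS$ of $H$ is exactly the paper's $s_1 s_2\cdots s_{m-1}S$, and your assembled walk $W$ is the paper's $(s_1,\ldots,s_{m-1},a_i)_{i=1}^{n}\#$. You have simply written out the verification that the paper leaves to the reader.
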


\begin{proof}
Let 
	\noprelistbreak
	\begin{itemize}
	\item $(s_i)_{i=1}^m$ be a hamiltonian cycle in the quotient digraph $H \backslash {\Cayd(G;S)}$,
	and
	\item $(s_1s_2\cdots s_{m-1} a_i)_{i=1}^{n-1}$ be a hamiltonian path in the Cayley digraph
$\Cayd(H; s_1s_2\cdots s_{m-1} S)$.
	\end{itemize}
Then it is not difficult to verify that
	$$(s_1, s_2, \ldots, s_{m-1}, a_i)_{i=1}^{n} \# $$
is a hamiltonian path in $\Cayd(G;S)$.
\end{proof}

\section{Proofs of the main results} \label{MainPfSect}

The heart of our argument is contained in the following result, which is adapted from the proof of \cite[Thm.~6.1]{Witte-pn}, and may be of independent interest.

\begin{prop} \label{HPinArcForcing}
Let 
	\noprelistbreak
	\begin{itemize}
	\item $S$ be a generating set of a finite group~$G$,
	and
	\item $H = \langle S^{-1} S \rangle$ be the arc-forcing subgroup.
	\end{itemize}
If 
	\noprelistbreak
	\begin{itemize}
	\item $G$ is nilpotent, 
	and
	\item every connected Cayley digraph on~$H$ has a hamiltonian path\/ {\upshape(}or hamiltonian cycle, respectively\/{\upshape)}, 
	\end{itemize}
then $\Cayd(G;S)$ has a hamiltonian path\/ {\upshape(}or hamiltonian cycle, respectively\/{\upshape)}.
\end{prop}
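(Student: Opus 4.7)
The plan is to descend from the normal closure $H^G$ down to the trivial subgroup in stages, using \cref{skewed} and \cref{SkewedPath}. The base of the induction is a hamiltonian cycle in $H^G \backslash \Cayd(G;S)$: for any $s_1, s_2 \in S$, the element $s_1 s_2^{-1} = s_1 (s_1^{-1} s_2) s_1^{-1}$ lies in $s_1 H s_1^{-1} \subseteq H^G$, so all elements of $S$ project to the same image $\bar{s}$ in $G/H^G$. Hence $G/H^G = \langle \bar{s} \rangle$ is cyclic, and the quotient digraph is essentially a single directed cycle (with edge-multiplicity $|S|$), which trivially admits a hamiltonian cycle.

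Next, apply \cref{SubnormalSeries} to obtain a subnormal series $H = H_1 \normal H_2 \normal \cdots \normal H_m = H^G$ in which each factor $H_{k+1}/H_k$ has prime-power order and is generated by a $G$-conjugate of $H = \langle S^{-1} S \rangle$. I would then induct downward on $k$: given a hamiltonian cycle in $H_{k+1} \backslash \Cayd(G;S)$, apply \cref{skewed} with $H^+ = H_{k+1}$ and $H^- = H_k$. The three required hypotheses hold because the cycle in the quotient is the inductive assumption, every connected Cayley digraph on the $p$-group $H_{k+1}/H_k$ has a hamiltonian cycle by \cref{pn}, and $H_{k+1}/H_k$ is generated by a $G$-conjugate of $\langle S^{-1} S \rangle$ by construction. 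After $m-1$ such steps, one obtains a hamiltonian cycle in $H \backslash \Cayd(G;S)$.

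To conclude the path version, apply \cref{SkewedPath}: the cycle in $H \backslash \Cayd(G;S)$ just obtained, together with the hypothesis that every connected Cayley digraph on $H$ has a hamiltonian path, yields a hamiltonian path in $\Cayd(G;S)$. For the cycle version, apply \cref{skewed} one last time with $H^+ = H$ and $H^- = \{e\}$: the factor $H/\{e\} = H$ is trivially generated by a $G$-conjugate of itself, and the cycle hypothesis on $H$ supplies the final ingredient. The main conceptual point is the base case---recognizing that passing to $G/H^G$ collapses the generating set to a single element---after which the argument is a routine bookkeeping exercise that stitches together \cref{SubnormalSeries}, \cref{pn}, and \cref{skewed}.
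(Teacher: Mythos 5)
Your proposal is correct and follows essentially the same route as the paper: the subnormal series from \cref{SubnormalSeries}, repeated application of \cref{skewed} using \cref{pn} on each prime-power factor, and a final application of \cref{SkewedPath} (or \cref{skewed} with trivial $H^-$), with the base case being the observation that all of $S$ collapses to a single generator of the cyclic quotient $G/H^G$. (There is only a harmless algebraic slip in the base case: the conjugation identity should read $s_1 s_2^{-1} = s_1(s_2^{-1}s_1)s_1^{-1}$, or more simply one can note $s_1^{-1}s_2 \in H \subseteq H^G$ directly.)
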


\begin{proof} 
Consider the subnormal series 
	$$ H = H_1 \normal H_2 \normal \cdots \normal H_m = H^G $$
that is provided by \cref{SubnormalSeries}, and choose some $a \in S$. Since 
	$$\langle a, H \rangle = \langle a,a^{-1} S \rangle = \langle S \rangle = G ,$$
we know that $a$ generates the quotient group $G/H^G  = G/H_m$. Thus, $H_m \backslash {\Cayd(G ; a)}$ is a directed cycle. Furthermore, for each~$k$, \cref{pn} tells us that every connected Cayley digraph on $H_{k+1}/H_k$ has a hamiltonian cycle. Thus, repeated application of \cref{skewed} (with $H^+ = H_{k+1}$ and $H^- = H_k$, for $k = m-1,m-2,\ldots,1$) tells us that $H_1 \backslash {\Cayd(G;S)}$ has a hamiltonian cycle. Since $H_1 = H$, this means $H \backslash {\Cayd(G;S)}$ has a hamiltonian cycle. 

By assumption, we also know that every connected Cayley digraph on~$H$ has a hamiltonian path (or hamiltonian cycle, respectively). Therefore, \cref{SkewedPath} (or \cref{skewed} with $H^+ = H$ and $H^- = \{e\}$) provides a hamiltonian path (or hamiltonian cycle) in $\Cayd(G;S)$.
\end{proof}

\begin{rem}
The proof of \cref{pn} is a minor modification of the proof of \cref{HPinArcForcing}. Namely, rather than appealing to \cref{pn} in order to know that every connected Cayley digraph on $H_{k+1}/H_k$ has a hamiltonian cycle, one can assume this is true by induction on $|G|$.
The same induction hypothesis also implies that every connected Cayley digraph on~$H$ has a hamiltonian cycle.
\end{rem}

\begin{cor} \label{ArcForcingIsAbelian}
Let  $S$ be a generating set of the group~$G$. If
	\noprelistbreak
	\begin{itemize}
	\item $G$ is nilpotent, 
	and 
	\item the arc-forcing subgroup $H = \langle S^{-1} S \rangle$ is abelian,
	\end{itemize}
then $\Cayd(G;S)$ has a hamiltonian path.
\end{cor}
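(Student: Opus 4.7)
The plan is to observe that the corollary follows essentially immediately from \cref{HPinArcForcing}, combined with the classical fact (\cref{abelian}) that every connected Cayley digraph on an abelian group has a hamiltonian path.

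More precisely, I would verify the two hypotheses of \cref{HPinArcForcing}. The first hypothesis, that $G$ is nilpotent, is given. For the second hypothesis, I need to show that every connected Cayley digraph on~$H$ has a hamiltonian path; but since $H$ is assumed abelian, this is exactly \cref{abelian}. Having verified both hypotheses, \cref{HPinArcForcing} directly yields a hamiltonian path in $\Cayd(G;S)$.

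There is no real obstacle here: the work has already been done in \cref{HPinArcForcing}, whose proof handles the delicate business of lifting a hamiltonian cycle in $H \backslash \Cayd(G;S)$ through the subnormal series supplied by \cref{SubnormalSeries} and then splicing in a hamiltonian path on~$H$ via \cref{SkewedPath}. The abelian hypothesis on~$H$ is used only to invoke the classical \cref{abelian}, which guarantees a hamiltonian path on the arc-forcing subgroup regardless of the generating set induced on it. Thus the corollary is merely recording the simplest convenient instance of \cref{HPinArcForcing}.
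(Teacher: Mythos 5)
Your proof is correct and is essentially identical to the paper's own argument: the paper likewise notes that \cref{abelian} supplies a hamiltonian path in every connected Cayley digraph on the abelian group~$H$, and then applies \cref{HPinArcForcing}. Nothing further is needed.
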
 

\begin{proof}
\Cref{abelian} tells us that every connected Cayley digraph on~$H$ has a hamiltonian path, so \cref{HPinArcForcing} applies.
\end{proof}

\begin{proof}[\bf Proof of \cref{2GenHP}]
Let $\{a,b\}$ be a $2$-element generating set for~$G$. Then the arc-forcing subgroup $H = \langle a^{-1} b \rangle$ is cyclic, so it is abelian. Therefore \cref{ArcForcingIsAbelian} provides a hamiltonian path in $\Cayd(G;a,b)$.
\end{proof}

\begin{proof}[\bf Proof of \cref{G=PxA}]
Let $\Cayd(G;S)$ be a connected Cayley digraph on $G = P \times A$, and let $H = \langle S^{-1} S \rangle$ be the arc-forcing subgroup.
We may assume the generating set~$S$ is minimal. 

\setcounter{case}{0}

\begin{case}
Assume $H \neq G$.
\end{case}
By induction on $|G|$, we may assume every connected Cayley digraph on~$H$ has a hamiltonian path. Then \cref{HPinArcForcing} provides a hamiltonian path in $\Cayd(G;S)$.

\begin{case}
Assume $H = G$.
\end{case}
Choose some $a \in S$, and let $\overline{\phantom{x}} \colon G \to P$ be the natural projection homomorphism. 
Since $G = H = \langle a^{-1} S - \{e\} \rangle$, and the minimal generating sets of any finite $p$-group all have the same cardinality \cite[Satz~III.3.15, p.~273]{Huppert1}, there is a proper subset $S_0$ of~$S$, such that $\langle \overline{S_0} \rangle = P$. Since $G/P \iso A$ is abelian, this implies $[G,G] \subseteq \langle S_0 \rangle$. Therefore $\langle S_0 \rangle \normaleq G$.

Let $N = \langle S_0 \rangle \normaleq G$. Since $S_0$ is a proper subset of~$S$, and $S$ is minimal, we know $N \neq G$. Also, we may assume $[G,G]$ is nontrivial, for otherwise \cref{abelian} provides a hamiltonian path in $\Cayd(G;S)$. Therefore $N$ is nontrivial. Hence, by induction on $|G|$, we may assume every connected Cayley digraph on~$N$ or $G/N$ has a hamiltonian path; let 
	\noprelistbreak
	\begin{itemize}
	\item $(s_i)_{i= 1}^n$ be a hamiltonian path in $\Cayd(N;S_0)$,
	and
	\item $(t_j)_{j=1}^q$ be a hamiltonian path in $\Cayd(G/N;S)$.
	\end{itemize}
Then it is easy to see (and well known) that
	$ \bigl( (s_i)_{i= 1}^n, t_j \bigr)_{j = 1}^{q+1} \# $
is a hamiltonian path in $\Cayd(G;S)$.
\end{proof}

\begin{proof}[\bf Proof of \cref{val4}]
Suppose $\Cay(G;S)$ is a connected Cayley graph of valence $\le 4$, and $G$ is nilpotent. 
There is no harm in assuming that the generating set~$S$ is minimal.
Let $S_2$ be the set of elements of order~$2$ in~$S$.
Also, let $P$ be the Sylow $2$-subgroup of~$G$, so $G = P \times K$, where $|K|$ is odd.

If $\#S - \#S_2 \le 1$, then, since $S_2 \subseteq P$, we know $K \iso G/P$ is cyclic. Therefore $K$ is abelian, so \cref{G=PxA} applies.

We may now assume $\#S - \#S_2 \ge 2$. Then
	\begin{align*}
	 4 &\ge \text{valence of $\Cay(G;S)$} = \#(S \cup S^{-1}) 
	 \\& = 2 (\#S - \#S_2) + \#S_2 \ge 2 (\#S - \#S_2) \ge 2 \cdot 2 
	 . \end{align*}
We must have equality throughout, so $\#S = 2$ (and $S_2 = \emptyset$). Then \cref{2GenHP} provides a hamiltonian path in $\Cay(G;S)$.
\end{proof}

The following generalization of \cref{pn} is sometimes useful.

\begin{cor}
Suppose
	\noprelistbreak
	\begin{itemize}
	 \item $S$ is a nonempty generating set of a group~$G$,
	 \item $N$ is a normal $p$-subgroup of~$G$, for some prime~$p$,
	 and
	 \item there exists $a \in G$, such that $S \subset aN$. 
	 \end{itemize}
Then $\Cayd(G;S)$ has a hamiltonian cycle.
\end{cor}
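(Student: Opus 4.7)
The plan is to reduce this corollary to Proposition~\ref{HPinArcForcing}, with the nilpotency hypothesis on~$G$ weakened (via Remark~\ref{SubnormalSeriesHGp}) to the hypothesis that the normal closure of the arc-forcing subgroup is a $p$-group.

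First I will show that the arc-forcing subgroup $H = \langle S^{-1} S \rangle$ lies in~$N$: given $s_1, s_2 \in S$, writing $s_i = a n_i$ with $n_i \in N$ yields $s_1^{-1} s_2 = n_1^{-1} n_2 \in N$. Since $N \normaleq G$, this forces $H^G \subseteq N$, so $H^G$ (and hence also~$H$) is a $p$-group. This is exactly the situation tailored to Remark~\ref{SubnormalSeriesHGp}.

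Next I will run the argument of Proposition~\ref{HPinArcForcing} essentially verbatim, replacing its appeal to Lemma~\ref{SubnormalSeries} under nilpotency with the version supplied by Remark~\ref{SubnormalSeriesHGp}. Concretely, I take the subnormal series $H = H_1 \normal H_2 \normal \cdots \normal H_m = H^G$ furnished by that remark, and pick any $a' \in S$ (this is legitimate because $S$ is nonempty, and $S \subset aN$ implies $S \subset a'N$ as well). Since $\langle a', H \rangle = \langle a', (a')^{-1} S \rangle = \langle S \rangle = G$, the element $a'$ generates $G/H^G$, so $H^G \backslash \Cayd(G;a')$ is a directed cycle, which is a hamiltonian cycle in $H^G \backslash \Cayd(G;S)$. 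Repeated application of Lemma~\ref{skewed} (with $H^+ = H_{k+1}$, $H^- = H_k$, for $k = m-1, \ldots, 1$) lifts this step by step to a hamiltonian cycle in $H \backslash \Cayd(G;S)$, each step being justified because $H_{k+1}/H_k$ has prime-power order and is generated by a $G$-conjugate of~$H$ (Theorem~\ref{pn} then supplies the required hamiltonian cycle on $H_{k+1}/H_k$).

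Finally, since $H$ is itself a $p$-group, Theorem~\ref{pn} guarantees that every connected Cayley digraph on~$H$ has a hamiltonian cycle. One last invocation of Lemma~\ref{skewed}, with $H^+ = H$ and $H^- = \{e\}$ (the hypothesis on the arc-forcing subgroup being trivially satisfied), then produces the desired hamiltonian cycle in $\Cayd(G;S)$. I do not foresee a genuine obstacle: the hypothesis $S \subset aN$ was designed precisely to put the arc-forcing subgroup inside a normal $p$-subgroup, which is exactly what Remark~\ref{SubnormalSeriesHGp} needs in order to run the proof of Proposition~\ref{HPinArcForcing} without assuming $G$ is nilpotent.
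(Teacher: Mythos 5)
Your proposal is correct and follows the paper's own argument essentially verbatim: the paper likewise observes that $H = \langle S^{-1}S\rangle \subseteq (aN)^{-1}(aN) = N$ forces $H^G \subseteq N$ to be a $p$-group, invokes Remark~\ref{SubnormalSeriesHGp} for the subnormal series, and then reruns the proof of Proposition~\ref{HPinArcForcing} (with Theorem~\ref{pn} supplying hamiltonian cycles on the $p$-group~$H$) to conclude. The only difference is that you spell out the steps of that rerun explicitly, which the paper leaves implicit.
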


\begin{proof}
Let $H = \langle S^{-1} S \rangle \subset (aN)^{-1} (aN) = N$. Since $N \normal G$, this implies $H^G \subset N$, so $H^G$ is a $p$-group. Hence, \cref{SubnormalSeriesHGp} provides a subnormal series as in \cref{SubnormalSeries}, and \cref{pn} tells us that every connected Cayley digraph on~$H$ has a hamiltonian cycle. Then the proof of \cref{HPinArcForcing} provides a hamiltonian cycle in $\Cayd(G;S)$.
\end{proof}

\nocite{*}
\bibliographystyle{amsplain}

\end{document}